\newtheorem{lemma}{Lemma}
\newtheorem{theorem}[lemma]{Theorem}
\newtheorem{proposition}[lemma]{Proposition}
\theoremstyle{definition}
\newtheorem{conjecture}[lemma]{Conjecture}
\numberwithin{equation}{section}
\renewcommand{\geq}{\geqslant}
\renewcommand{\leq}{\leqslant}
\renewcommand{\AA}{\ensuremath{\mathbb{A}}} 
\newcommand{\NN}{\ensuremath{\mathbb{N}}} 
\newcommand{\FF}{\ensuremath{\mathbb{F}}} 
\newcommand{\CC}{\ensuremath{\mathbb{C}}} 
\newcommand{\PP}{\ensuremath{\mathbb{P}}} 
\newcommand{\QQ}{\ensuremath{\mathbb{Q}}} 
\newcommand{\RR}{\ensuremath{\mathbb{R}}} 
\newcommand{\ZZ}{\ensuremath{\mathbb{Z}}} 
\DeclareMathOperator{\variety}{V}
\newcommand{\cst}[1]{{\llbracket #1 \rrbracket}}
\newcommand{\eulerian}[2]{\genfrac{\langle}{\rangle}{0pt}{}{#1}{#2}}
\renewcommand\subsection{\@startsection{subsection}{2}%
  \z@{.5\linespacing\@plus.7\linespacing}{-.5em}%
  {\normalfont\scshape}}
\begin{document}

\title[Laurent Polynomials and Eulerian Numbers]{Laurent
  Polynomials and Eulerian Numbers}

\author[D.~Erman]{Daniel Erman}
\address{\texttt{derman@math.berkeley.edu} : Department of
  Mathematics\\ University of California\\ Berkeley\\ CA \\
  94720-3840\\ USA}

\author[G.G.~Smith]{Gregory G. Smith}
\address{\texttt{ggsmith@mast.queensu.ca} : Department of Mathematics
  \& Statistics \\ Queen's University \\ Kingston \\ ON \\ K7L 3N6\\
  Canada}

\author[A.~V\'arilly-Alvarado]{Anthony V\'arilly-Alvarado}
\address{\texttt{varilly@rice.edu} : Department of Mathematics\\ MS
  136\\ Rice University\\ 6100 South Main Street\\ Houston\\ TX\\
  77005-1892\\ USA}


\subjclass[2000]{05A10, 14N10, 14M25}

\begin{abstract}
  Duistermaat and van der Kallen show that there is no nontrivial
  complex Laurent polynomial all of whose powers have a zero constant
  term.  Inspired by this, Sturmfels poses two questions: Do the
  constant terms of a generic Laurent polynomial form a regular
  sequence?  If so, then what is the degree of the associated
  zero-dimensional ideal?  In this note, we prove that the Eulerian
  numbers provide the answer to the second question.  The proof
  involves reinterpreting the problem in terms of toric geometry.
\end{abstract}

\maketitle

\section{Motivation and Statement of Theorem}
\label{s:intro}

\noindent
In \cite{DvdK}, J.J.~Duistermaat and W. van der Kallen establish that,
for any Laurent polynomial $f \in \CC[z,z^{-1}]$ that is neither a
polynomial in $z$ nor $z^{-1}$, there exists a positive power of $f$
that has a nonzero constant term.  Motivated by this result,
Sturmfels~\cite{Sturmfels}*{\S2.5} asks for an effective version: Can
we enumerate the Laurent polynomials that have the longest possible
sequence of powers with zero constant terms?

By rephrasing this question in the language of commutative algebra,
Sturmfels also offers a two-step approach for answering it.
Specifically, consider the Laurent polynomial
\begin{equation}
  \tag{$\spadesuit$}
  \label{e:affine}
  f(z) := z^{-m} + x_{-m+1}^{\,} \, z^{-m+1} + \dotsb + x_{n-1}^{\,}
  \, z^{\, n-1} + z^{\, n}
\end{equation}
and, for any positive integer $i$, let $\cst{f^i}$ denote the constant
coefficient of the $i$-th power of $f$.  First, Problem~2.11 in
\cite{Sturmfels}*{\S2.5}, together with computational evidence,
suggests the following:

\begin{conjecture}
  \label{c:sturmfels}
  The coefficients
  $\cst{f^1}, \cst{f^2}, \dotsc, \cst{f^{m+n}}$ generate the unit
  ideal in the polynomial ring $\CC[x_{-m+1}, \dotsc, x_{n-1}]$.
\end{conjecture}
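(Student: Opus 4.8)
The plan is to prove the equivalent geometric statement through the Nullstellensatz: the ideal $(\cst{f^1},\dots,\cst{f^{m+n}})$ equals $(1)$ precisely when the polynomials $\cst{f^1},\dots,\cst{f^{m+n}}$ have no common zero in $\AA^{m+n-1}=\operatorname{Spec}\CC[x_{-m+1},\dots,x_{n-1}]$. Such a common zero is a specialization of the coefficients for which the constant terms of $f,f^2,\dots,f^{m+n}$ all vanish, so the Conjecture is exactly the effective form of the Duistermaat--van der Kallen theorem: the vanishing of these first $m+n$ constant terms should already force $f$ to be a monomial in $z$ or $z^{-1}$, which is impossible because $f$ retains the two prescribed terms $z^{-m}$ and $z^{n}$.

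To set up the analysis I would work with $P(z):=z^{m}f(z)=1+x_{-m+1}z+\dots+x_{n-1}z^{m+n-1}+z^{m+n}$, a polynomial of degree $m+n$ with $P(0)=1$, and record the identities $\cst{f^i}=[z^{mi}]P(z)^i=\operatorname{Res}_{z=0}\bigl(f(z)^i\,\tfrac{dz}{z}\bigr)$. Regarding $f=P/z^{m}$ as a degree-$(m+n)$ map $\PP^1\to\PP^1$ with a pole of order $m$ at $0$ and order $n$ at $\infty$, the form $f^i\,dz/z$ has poles only at $0$ and $\infty$; the contour may therefore be moved to the unit circle, yielding the moment description $\cst{f^i}=\tfrac{1}{2\pi}\int_0^{2\pi}f(e^{\sqrt{-1}\theta})^i\,d\theta$. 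Equivalently, I would assemble the whole sequence into the algebraic generating function
\[
\Phi(t):=\sum_{i\ge0}\cst{f^i}\,t^i=\operatorname{Res}_{z=0}\frac{z^{m-1}\,dz}{z^{m}-tP(z)}=\sum_{j=1}^{m}\frac{P(\zeta_j)}{m\,P(\zeta_j)-\zeta_j\,P'(\zeta_j)},
\]
where $\zeta_1(t),\dots,\zeta_m(t)$ are the $m$ roots of $z^{m}=tP(z)$ that are small for $t$ near $0$. A common zero of the first $m+n$ constant terms is precisely the congruence $\Phi(t)\equiv 1\pmod{t^{m+n+1}}$, while Duistermaat--van der Kallen gives $\Phi\equiv1$ only in the monomial case; the goal is to show that the branch structure of $\Phi$ is rigid enough that agreement with $1$ to order $m+n$ propagates to all orders.

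A soft argument already supplies some finite threshold: the loci $Z_N:=\variety(\cst{f^1},\dots,\cst{f^N})$ form a descending chain of affine varieties whose total intersection is empty by Duistermaat--van der Kallen (every specialization is a genuine non-monomial), so $Z_N=\emptyset$ for $N\gg0$ by Noetherianity. The real content, and the main obstacle, is to pin the threshold at exactly $N=m+n$. Here I expect the toric reinterpretation to be indispensable. The rational normal curve $C\subset\PP^{m+n}$ parametrized by $z\mapsto[1:z:\dots:z^{m+n}]$ carries $P$ as a hyperplane section, and $\cst{f^i}$ is the coefficient of the monomial $(s^{n}u^{m})^i$ in the $i$-th power of the binary form $F(s,u):=s^{m+n}P(u/s)$ of degree $m+n$. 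The number $m+n$ is the degree of $C$, and the distinguished monomial $s^{n}u^{m}$ sits at the lattice point that divides the interval $[0,m+n]$ in the ratio $m:n$.

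I would then compute the pertinent intersection number on a toric compactification of the coefficient space, expecting a guaranteed nonvanishing to surface exactly at step $m+n$: for instance the part of $\cst{f^{m+n}}$ involving none of the variables equals $\binom{m+n}{m}\ne0$, which already rules out the origin as a common zero and signals that $m+n$ is the correct cutoff. Converting this numerical coincidence into a global emptiness statement---excluding common zeros away from the origin---is the step I anticipate to be hardest, and it is exactly where the Eulerian-number combinatorics underpinning the degree computation should re-enter, controlling the intersection of the hypersurfaces $\variety(\cst{f^i})$ on the toric boundary.
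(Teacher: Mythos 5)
There is a genuine gap here, and indeed the statement you are trying to prove is not proved in the paper at all: it is Sturmfels's Conjecture~\ref{c:sturmfels}, left open, and Theorem~\ref{t:main} is explicitly conditional on it. Your write-up correctly performs the Nullstellensatz reduction, correctly observes that every specialization of $f$ in \eqref{e:affine} is neither a polynomial in $z$ nor in $z^{-1}$ (because of the two coefficient-$1$ terms), and correctly deduces from Duistermaat--van der Kallen plus Noetherianity that $\variety(\cst{f^1},\dotsc,\cst{f^N})=\varnothing$ for \emph{some} $N\gg 0$. But that weaker statement does not imply the conjecture: the ideal $\langle\cst{f^1},\dotsc,\cst{f^{m+n}}\rangle$ is \emph{contained} in the ideal generated by the first $N$ constant terms, so emptiness for large $N$ says nothing about emptiness at the threshold $m+n$. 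Everything after that point in your proposal --- the generating function $\Phi(t)$, the moment integral, the rational normal curve --- is setup that is never converted into an argument; you say yourself that pinning the threshold at $m+n$ and "excluding common zeros away from the origin" is "the step I anticipate to be hardest." That step is the entire content of the conjecture, and it is missing.

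One concrete warning about the strategy you sketch for closing the gap: you hope that the intersection-number computation on the toric compactification (the Eulerian-number count) will force emptiness of the common zero locus of $\cst{f^1},\dotsc,\cst{f^{m+n}}$. It cannot, by itself. The toric variety $X(m,n)$, its Chow ring, and the degree computation of Lemma~\ref{l:chow} and Proposition~\ref{p:count} are defined over $\ZZ$ and are insensitive to the characteristic of the ground field, yet the conjecture is \emph{false} in positive characteristic (e.g.\ $f=z^{-1}+z$ over $\FF_2$ has $\cst{f^i}=0$ for all $i$, as noted in \S\ref{s:regseq}). So any correct proof must use characteristic zero in an essential, non-enumerative way; the genericity statement proved in Proposition~\ref{p:degZ} (that \emph{generic} forms $g_1,\dotsc,g_{m+n}$ of the right degrees have empty common zero locus on $X$) does not transfer to the very special forms $\cst{\tilde f^i}$, and bridging that is exactly the open problem. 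Your observation that $\cst{f^{m+n}}$ has constant coefficient $\binom{m+n}{m}\neq 0$ only excludes the origin, not the rest of $\AA^{m+n-1}$ or the toric boundary.
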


\noindent
Second, assuming this conjecture, Exercise~13 in
\cite{Sturmfels}*{\S2.6} asks for the degree of the ideal $I_{m,n} :=
\bigl\langle \cst{f^1}, \cst{f^2}, \dotsc, \cst{f^{m+n-1}}
\bigr\rangle$.  The zeros of $I_{m,n}$ would be the Laurent
polynomials of the form~\eqref{e:affine} that have the longest
possible sequence of powers with vanishing constant terms.

The goal of this article is to complete the second part.
Theorem~\ref{t:main} provides the unexpected and attractively simple
answer.  Following \cite{GKP}*{\S6.2}, the Eulerian number
$\eulerian{n}{k}$ is the number of permutations of $\{1, \dotsc, n\}$
with exactly $k$ ascents.

\begin{theorem}
  \label{t:main}
  If Conjecture~\ref{c:sturmfels} holds, then the degree of the ideal
  $I_{m,n}$ is $\eulerian{m+n-1}{m-1}$.
\end{theorem}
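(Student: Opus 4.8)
The plan is to convert the algebraic count into a problem in toric intersection theory and then evaluate the resulting number combinatorially. First I would remove the pole by writing $f(z) = z^{-m} g(z)$, where $g(z) = z^{m+n} + x_{n-1}^{}\, z^{m+n-1} + \dotsb + x_{-m+1}^{}\, z + 1$ is monic of degree $D := m+n$ with $g(0) = 1$; then $\cst{f^{i}}$ is exactly the coefficient of $z^{mi}$ in $g(z)^{i}$. Reinstating the two outer coefficients as variables $c_{-m}, c_{n}$ turns $g$ into a general binary form of degree $D$, and each resulting constant term becomes bihomogeneous: under the two-torus action $c_{j} \mapsto \lambda^{\,n-j} \mu^{\,j+m} c_{j}$ (with $c_{j} = x_{j}$ for $-m < j < n$), the $i$-th constant term is homogeneous of bidegree $(in, im)$. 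Thus the $\cst{f^{i}}$ are the restrictions, to the chart $U = \{c_{-m} \ne 0,\ c_{n} \ne 0\}$, of global sections of line bundles $L_{i}$ of class $i\,(n,m)$ on the $(m+n-1)$-dimensional projective toric variety $X$ obtained as the Cox/GIT quotient of $\AA^{D+1}$ by this torus; concretely $X = \PP^{D} /\!\!/ \CC^{*}$ for the weights $0, 1, \dotsc, D$. The degree of $I_{m,n}$ equals the number, with multiplicity, of common zeros of $\cst{f^{1}}, \dotsc, \cst{f^{m+n-1}}$ lying in $U$.

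Next I would pass to the compactification $X$ and analyze how these hypersurfaces meet the toric boundary. The two outer invariant divisors $\{c_{-m} = 0\}$ and $\{c_{n} = 0\}$ carry classes $(n+m, 0)$ and $(0, n+m)$, and are themselves the toric varieties associated to the Newton segments $[-m+1, n]$ and $[-m, n-1]$, i.e.\ to the instances $(m-1, n)$ and $(m, n-1)$ of the same problem. This self-similarity is what I would exploit: restricting the sections $L_{i} = i\,(n,m)$ to each outer divisor, the integers $n$ and $m$ emerge from how $(n,m)$ meets the two boundary classes $(n+m,0)$ and $(0,n+m)$. Tracking these contributions, I would establish for $\deg I_{m,n}$ the recurrence $\eulerian{N}{k} = (k+1)\,\eulerian{N-1}{k} + (N-k)\,\eulerian{N-1}{k-1}$ with $N = m+n-1$ and $k = m-1$, the two summands coming from the $(m, n-1)$ and $(m-1, n)$ boundaries and carrying multiplicities $m = k+1$ and $n = N-k$. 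Since this is precisely the defining Eulerian recursion and the degenerate instances reproduce the boundary entries of the Eulerian triangle, induction on $m + n$ would give $\deg I_{m,n} = \eulerian{m+n-1}{m-1}$. Equivalently, and avoiding the recursion, one can recognize the count as a normalized volume: the sections of $L_{i}$ are indexed by the lattice points of the $i$-th dilate of the hypersimplex slab $P = \{a \in \RR^{D+1}_{\geq 0} : \sum_{j} a_{j} = 1,\ \sum_{j} j\, a_{j} = 0\}$, and the classical evaluation of the volume of such a slab of the unit cube (Laplace's formula, equivalently Worpitzky's identity) returns the Eulerian number.

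The delicate point — and the step where Conjecture~\ref{c:sturmfels} is indispensable — is the boundary analysis. The class $(n,m)$ is not ample (indeed $L_{1}$ has only the single section $c_{0}$), so the naive B\'ezout count degenerates and cannot be used directly; the genuine content lies in showing that $\variety(\cst{f^{1}}, \dotsc, \cst{f^{m+n-1}})$ is zero-dimensional and in accounting exactly for how its points distribute between $U$ and the successive boundary strata. The hypothesis that $\cst{f^{1}}, \dotsc, \cst{f^{m+n}}$ generate the unit ideal — applied not only to $(m,n)$ but inductively to the smaller segments appearing on the boundary — is exactly what rules out spurious common zeros at infinity and forces the intersection multiplicities (including the finite quotient factors introduced by the GIT quotient along $U$) to conspire into the Eulerian recurrence on the nose rather than merely up to a constant. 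I expect this multiplicity bookkeeping on the toric boundary to be the main obstacle; everything else is either a routine reduction or a known combinatorial identity for $\eulerian{N}{k}$.
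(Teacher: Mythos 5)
Your proposal follows essentially the same route as the paper: homogenize by restoring the outer coefficients, compactify to the $(m+n-1)$-dimensional toric quotient $X$ of $\AA^{m+n+1}$ by the $(\CC^*)^2$-action, use Conjecture~\ref{c:sturmfels} (applied to the boundary segments) to force all common zeros into the locus $x_{-m}x_n \neq 0$, and then evaluate the resulting intersection number $(m+n-1)!\,D_0^{\,m+n-1}$, which the paper does via Worpitzky's identity in the Chow ring of $X$ (Lemma~\ref{l:chow}) --- exactly your ``equivalent'' hypersimplex-slab/Laplace evaluation. Your preferred variant of that last step --- deriving the Eulerian recurrence by restricting to the two outer toric divisors --- is the one piece that differs and is the shakier option: since the conjecture leaves no zeros on the boundary there are no points to ``distribute'' among strata, so the recurrence must be extracted as a formal Chow-ring identity from $(m+n)D_0 = nD_{-m}+mD_n$, where the lattice-index corrections in restricting to $V(x_{-m})$ and $V(x_n)$ do not produce the weights $m$ and $n$ for free; but your fallback via Worpitzky is precisely the paper's argument.
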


\noindent
This result is equivalent to saying that the dimension of
$\CC[x_{-m+1}, \dotsc,x_{n-1}]/I_{m,n}$, as a $\CC$\nobreakdash-vector
space, is $\eulerian{m+n-1}{m-1}$.

Notably, Theorem~\ref{t:main} gives a new interpretation for the
Eulerian numbers: $\eulerian{m+n-1}{m-1}$ enumerates certain Laurent
polynomials.  Even without Conjecture~\ref{c:sturmfels}, we show that
these Eulerian numbers count the solutions to certain systems of
polynomial equations; see Proposition~\ref{p:count}.  Despite
superficial similarities between our work and other appearances of
Eulerian numbers in algebraic geometry (e.g.{} \cites{BeckStapledon,
  Brenti, BrentiWelker, Hirzebruch, Stanley, Stembridge}), we know of
no substantive connection.

Our proof of Theorem~\ref{t:main}, given in \S\ref{s:proof}, recasts
the problem in terms of toric geometry --- we construe the degree of
$I_{m,n}$ as an intersection number on a toric compactification of the
space of Laurent polynomials of the form~\eqref{e:affine}.  Building
on this idea, \S\ref{s:refine} provides a recursive formula for the
degree of ideals similar to $I_{m,n}$ that arise from sparse Laurent
polynomials.  As a by-product, we give a geometric explanation for a
formula expressing $\eulerian{m+n-1}{m-1}$ as a sum of nonnegative
integers; see \eqref{e:decomp}.  We list several questions arising
from our work in \S\ref{s:open}.

\subsection*{Acknowledgements}

We thank David Eisenbud, Alexander Postnikov, Bernd Sturmfels, and
Mauricio Velasco for useful conversations.  The computer software
\emph{Macaulay~2}~\cite{M2} was indispensable in discovering both the
statement and proof of the theorem.  The Mathematical Sciences
Research Institute (MSRI), in Berkeley, provided congenial
surroundings in which much of this work was done.  Erman was partially
supported by a NDSEG fellowship, Smith was partially supported by
NSERC, and V\'arilly-Alvarado was partially supported by a Marie Curie
Research Training Network within the Sixth European Community
Framework Program.

\section{Toric Reinterpretation}
\label{s:proof}

\noindent
This section proves Theorem~\ref{t:main} by reinterpreting the degree
of $I_{m,n}$ as an intersection number on a projective variety
$X(m,n)$.  Subsection \S\ref{s:homogeneous} introduces a
homogenization of the ideal $I_{m,n}$, \S\ref{s:toric} describes the
toric variety $X(m,n)$, and \S\ref{s:chow} computes the required
intersection number.

\subsection{Homogenization}
\label{s:homogeneous}

For positive integers $m$ and $n$, consider the Laurent polynomial
\[
\tilde{f} :=x_{-m}^{\,} \, z^{-m} + x_{-m+1}^{\,} \, z^{-m+1} +
\dotsb + x_{n-1}^{\,}\, z^{\, n-1} + x_n^{\,} \, z^{\, n} \, ,
\]
and, for any positive integer $i$, let $\cst{\tilde{f}^i}$ denote the
constant coefficient of the $i$-th power of $\tilde{f}$.  Let $S$ be
the polynomial ring $\CC[x_{-m}, \dotsc, x_{n}]$ and let $J$ be the
$S$-ideal $\bigl\langle\cst{\tilde{f}^1}, \cst{\tilde{f}^2}, \dotsc,
\cst{\tilde{f}^{m+n-1}} \bigr\rangle$.  The $\CC$\nobreakdash-valued
points of $\variety(J) \subset \AA^{m+n+1}$ are precisely the Laurent
polynomials for which the constant term of the first $m+n-1$ powers
vanishes. Since $J$ is contained in the reduced monomial ideal $B :=
\langle x_{-m}, \dotsc, x_{-1} \rangle \cap \langle x_{0}, x_{1},
\dotsc, x_{n} \rangle$, the $\CC$\nobreakdash-valued points of
$\variety(J)$ not contained in $\variety(B)$ give rise to Laurent
polynomials that are neither polynomials in $z$ nor $z^{-1}$.

To understand the ideal $J$ more explicitly, let $\bm{w} := [
\begin{smallmatrix}
  -m & \dotsb & n
\end{smallmatrix}]^{\textsf{t}} \in \ZZ^{m+n+1}$.  If $\bm{u} \in
\NN^{m+n+1}$, then the multinomial theorem \cite{GKP}*{p.{} 168}
implies that
\[
\cst{\tilde{f}^i} = \sum_{
  \begin{subarray}{c}
    |\bm{u}| = i \\
    \bm{w} \cdot \bm{u} = 0
  \end{subarray}}
\binom{i}{\bm{u}} \bm{x}^{\bm{u}} = \sum_{
  \begin{subarray}{c}
    |\bm{u}| = i \\
    \bm{w} \cdot \bm{u} = 0
  \end{subarray}} \binom{i}{u_{1}, \dotsc, u_{m+n+1}} x_{-m}^{u_{1}}
\, x_{-m+1}^{u_{2}} \dotsb \, x_{n}^{u_{m+n+1}} \, .
\]
Hence, for all positive integers $i$, the polynomial
$\cst{\tilde{f}^i}$ is homogeneous of degree $\left[ 
  \begin{smallmatrix} 
    i \\ 0 
  \end{smallmatrix} \right]$ with respect to the
$\ZZ^2$\nobreakdash-grading of $S$ induced by
setting $\deg(x_{j}) := \left[ \begin{smallmatrix} 1 \\
    j \end{smallmatrix} \right] \in \ZZ^2$ for all $-m \leq j \leq n$.
In particular, the ideal $J$ is invariant under the automorphism of
$S$ determined by the map $\tilde{f}(z) \mapsto \lambda \tilde{f}(\xi
z)$ where $\lambda, \xi \in \CC^*$.  Moreover, if $x_{-m}$ and $x_n$
are both nonzero, then there exist scalars $\lambda$, $\xi \in \CC^*$
such that the image of $\tilde{f}$ under this $(\CC^*)^2$-action has
the form \eqref{e:affine}.

\subsection{Toric Variety}
\label{s:toric}

When $m + n > 2$, let $X(m,n)$ be the toric variety with total
coordinate ring $S$ (a.k.a.{} the Cox ring) and irrelevant ideal $B$;
see \cite{Cox}*{\S2}.  The variety $X(m,n)$ provides a toric
compactification for the space of all Laurent polynomials of the form
\eqref{e:affine}.  When no confusion is likely, we simply write $X$ in
place of $X(m,n)$.  Proposition~2.4 in \cite{Cox} shows that
homogeneous $S$-ideals (up to $B$-torsion) correspond to closed
subschemes of $X$.  Hence, the ideal $J$ determines a closed subscheme
$\variety_{\! X}(J)$ of $X$.  If $x_{-m}x_n$ is a nonzerodivisor on
$\variety_{\! X}(J)$, then \S\ref{s:homogeneous} shows that the degree
of the ideal $I_{m,n}$ equals the degree of $\variety_{\! X}(J)$.  We
prove Theorem~\ref{t:main} by computing the latter degree.

More concretely, $X$ is the toric variety associated to the following
strongly convex rational polyhedral fan $\Sigma$; see
\cite{Fulton}*{\S1.4}.  The lattice of one-parameter subgroups is $N =
\ZZ^{m+n-1}$ and the rays (i.e.{} one-dimensional cones) in the fan
$\Sigma$ are generated by the columns of the matrix:
\begin{align}
  \tag{$\clubsuit$}
  \label{e:matrix}
  \left[
    \begin{matrix}
      1 & -2 & 1 & 0 & \dotsb & 0 \\
      2 & -3 & 0 & 1 & \dotsb & 0 \\
      \vdots & \;\;\;\vdots & \vdots & \vdots & \ddots & \vdots \\
     m+n-1 & -m-n & 0 & 0 & \dotsb & 1
    \end{matrix}
  \right] \, .
\end{align}
With the column ordering, we label the rays in $\Sigma$ by $\rho_{-m},
\dotsc, \rho_n$.  For integers $1 \leq i \leq m$ and $0 \leq j \leq
n$, let $\sigma_{i,j}$ be the cone in $\RR^{m+n-1} = N \otimes_{\ZZ}
\RR$ spanned by all the rays except $\rho_{-i}$ and $\rho_j$.  The fan
$\Sigma$ is defined by taking these $\sigma_{i,j}$ as the maximal
cones.  By construction, $X$ is a singular simplicial projective toric
variety of dimension $m+n-1$.

\subsection{Intersection Theory}
\label{s:chow}

Since $X$ is a simplicial toric variety, its rational Chow ring
$A^*(X)_\QQ$ has an explicit presentation; see \cite{Fulton}*{\S5.2}.
Specifically, if $D_j$ is the torus-invariant Weil divisor associated
to the ray $\rho_j$ for all $-m \leq j \leq n$, then we have
\[
A^*(X)_\QQ = \frac{\QQ[D_{-m}, \dotsc, D_n]}{M+L}
\] 
where the monomial ideal $M := \langle D_{-m} D_{-m+1} \dotsb
D_{-1},D_0D_{1} \dotsb D_{n-1}D_{n} \rangle$ is the Alexander dual of
$B$, and the linear ideal $L := \langle iD_{-m} - (i+1)D_{-m+1} +
D_{-m+i+1} : 1 \leq i \leq m+n-1 \rangle$ encodes the rows of the
matrix \eqref{e:matrix}.

Choosing a shelling for the fan $\Sigma$ yields a distinguished basis
for $A^*(X)_\QQ$; again see \cite{Fulton}*{\S5.2}.  With this in mind,
we order the maximal cones of $\Sigma$ by $\sigma_{i,j} >
\sigma_{k,\ell}$ if $i+j > k+\ell$ or $i+j = k+\ell$ and $j > \ell$.
Let $\tau_{i,j}$ be the subcone of $\sigma_{i,j}$ obtained by
intersecting the maximal cone $\sigma_{i,j}$ with all cones
$\sigma_{k,\ell}$ satisfying $\sigma_{k,\ell} > \sigma_{i,j}$ and
$\dim \sigma_{i,j} \cap \sigma_{k,\ell} = m+n-2$.  We obtain a
shelling for $\Sigma$ (i.e.{} condition $(*)$ in \cite{Fulton}*{p.{}
  101} is satisfied) because $\dim \sigma_{i,j} \cap \sigma_{k,\ell} =
m+n-2$ if and only if $i = k$ and $j \neq \ell$ or $i \neq k$ and $j =
\ell$, so $\tau_{i,j} = \sigma_{i,j} \cap \bigl( \bigcap_{k > i}
\sigma_{k,j} \bigr) \cap \bigl( \bigcap_{\ell > j} \sigma_{i,\ell}
\bigr)$.  Hence, the collection $\{ [\variety(\tau_{i,j})] \}$ forms a
basis for $A^*(X)_\QQ$.

Set $D_{(-i,j)} := D_{-i+1} \dotsb D_{-1} \cdot D_0 \dotsb D_{j-1}$;
the empty product $D_{(-1,0)} = 1$ is the unit in $A^*(X)_\QQ$.  The
generators of $M$ imply that $D_{(-i,j)} = 0$ in $A^*(X)_\QQ$ if $i >
m$ or $j > n$. Since $\tau_{i,j}$ is spanned by the rays $\rho_\ell$
with $-i < \ell < j$, it follows that $[\variety(\tau_{i,j})] =
D_{(-i,j)}$.  Thus, $D_{(-i,j)}$ for $1 \leq i \leq m$ and $0 \leq j
\leq n$ forms a basis for $A^*(X)_\QQ$.  The degree of a
zero-dimensional subscheme $Y$ of $X$, denoted $\deg(Y)$, is the
rational number such that $[Y] = \deg(Y) \, D_{(-m,n)}$ in
$A^{m+n-1}(X)_{\QQ}$.

The following calculation is the key to proving Theorem~\ref{t:main}.

\begin{lemma}
  \label{l:chow}
  For $1 \leq k \leq m+n-1$, we have $k! \, D_{0}^{\, k} =
  \sum\limits_{i=1}^{k} \eulerian{k}{i-1} D_{(-i,k-i+1)}$ in
  $A^*(X)_\QQ$.
\end{lemma}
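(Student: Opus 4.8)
The plan is to prove the identity by induction on $k$, using the linear relations $L$ to encode the fact that the divisor classes $D_\ell$ are ``collinear'' in $A^1(X)_\QQ$. The generators of $L$ give $i D_{-m} - (i+1)D_{-m+1} + D_{-m+i+1} = 0$ for $1 \le i \le m+n-1$, which rearranges to $D_\ell = D_0 + \ell\, t$ for every $-m \le \ell \le n$, where $t := D_1 - D_0$; equivalently, the second differences vanish, $D_{\ell-1} - 2D_\ell + D_{\ell+1} = 0$. The point is that $D_0$ can then be written as an affine combination of any two of the $D_\ell$, which is exactly what converts a power of $D_0$ into the consecutive products $D_{(-i,j)}$.

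The engine of the induction is the following identity: for $i, j \ge 1$ with $-i \ge -m$ and $j \le n$ (so that $D_{-i}$ and $D_j$ exist),
\[
D_0 \, D_{(-i,j)} = \frac{j}{i+j}\, D_{(-i-1,\,j)} + \frac{i}{i+j}\, D_{(-i,\,j+1)} \, .
\]
To see this, solve $D_0 = \alpha D_{-i} + \beta D_j$ using $D_\ell = D_0 + \ell t$: the conditions $\alpha + \beta = 1$ and $-i\alpha + j\beta = 0$ force $\alpha = j/(i+j)$ and $\beta = i/(i+j)$. Multiplying this relation by $D_{(-i,j)}$ and using $D_{-i}\, D_{(-i,j)} = D_{(-i-1,j)}$ and $D_j\, D_{(-i,j)} = D_{(-i,j+1)}$ (each appends one more consecutive factor) yields the displayed formula.

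With this in hand the induction is quick. The base case $k=1$ reads $D_0 = \eulerian{1}{0} D_{(-1,1)} = D_0$. Assuming the statement for $k-1$, multiply the hypothesis $(k-1)!\,D_0^{k-1} = \sum_{i=1}^{k-1} \eulerian{k-1}{i-1} D_{(-i,k-i)}$ by $k D_0$ and apply the engine with $j = k-i$, so that $i+j = k$ and
\[
k!\,D_0^{k} = \sum_{i=1}^{k-1} \eulerian{k-1}{i-1}\Bigl[(k-i)\, D_{(-i-1,\,k-i)} + i\, D_{(-i,\,k-i+1)}\Bigr] \, .
\]
Collecting the coefficient of $D_{(-I,\,k-I+1)}$ — which receives $I\,\eulerian{k-1}{I-1}$ from the second bracketed term and $(k-I+1)\,\eulerian{k-1}{I-2}$ from the first (take $i = I-1$) — reproduces exactly the classical Eulerian recurrence $\eulerian{k}{p} = (p+1)\eulerian{k-1}{p} + (k-p)\eulerian{k-1}{p-1}$ with $p = I-1$, and the proof closes.

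The main thing to monitor is the boundary bookkeeping. By the monomial ideal $M$ one has $D_{(-i,j)} = 0$ whenever $i > m$ or $j > n$, so only indices with $k-n \le i \le m$ contribute to the inductive hypothesis; for precisely these the classes $D_{-i}$ and $D_j$ required by the engine exist, and the extended products $D_{(-i-1,k-i)}$, $D_{(-i,k-i+1)}$ vanish exactly when their indices leave $[-m,n]$, matching the convention that the out-of-range target terms $\eulerian{k}{I-1} D_{(-I,k-I+1)}$ are zero. Thus no ill-defined product is ever formed, and this is the only place where care is needed. As a sanity check, substituting $D_\ell = D_0 + \ell t$ turns the asserted equality into Worpitzky's identity $k!\,x^k = \sum_p \eulerian{k}{p}\prod_s (x + s\,t)$ in the variable $x = D_0$, read modulo $M$.
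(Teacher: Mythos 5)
Your proof is correct, but it follows a genuinely different route from the paper's. The paper deduces the lemma in one stroke from Worpitzky's identity $z^k = \sum_i \eulerian{k}{i}\binom{z+i}{k}$: after rearranging and homogenizing to $k!\,z^k = \sum_{i=1}^{k}\eulerian{k}{i-1}\,(z+(i-1)y)\dotsb(z+(i-k)y)$, the substitution $z\mapsto D_0$, $y\mapsto D_1-D_0$ together with the linear relations in $L$ turns each factor $z+(i-s)y$ into $D_{i-s}$, so every summand becomes a product of $k$ consecutive divisor classes, i.e.\ one of the $D_{(-i,k-i+1)}$ (up to the reindexing $i\leftrightarrow k-i+1$ and the symmetry $\eulerian{k}{i-1}=\eulerian{k}{k-i}$). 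You instead argue by induction on $k$: your splitting $D_0=\tfrac{j}{i+j}D_{-i}+\tfrac{i}{i+j}D_j$ plays the role of the linear relations, and the classical Eulerian recurrence replaces the closed-form Worpitzky identity. The two arguments are equivalent in content --- as your closing remark observes, your induction is essentially a proof of Worpitzky's identity carried out inside $A^*(X)_\QQ$ modulo $M$ --- but yours is self-contained (it does not quote Worpitzky) and makes the boundary vanishing $D_{(-i,j)}=0$ for $i>m$ or $j>n$ explicit at each step, whereas the paper's is shorter at the cost of leaving that bookkeeping and the index symmetry implicit. Your verification of the Eulerian recurrence $\eulerian{k}{I-1}=I\,\eulerian{k-1}{I-1}+(k-I+1)\,\eulerian{k-1}{I-2}$ from the two contributions is exactly right, and the base case $D_{(-1,0)}$ versus $D_{(-1,1)}=D_0$ is handled correctly.
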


\begin{proof}
  In the polynomial ring $\QQ[z]$, Worpitzky's identity is $z^k =
  \sum_i \eulerian{k}{i} \binom{z+i}{k}$; see eq. (6.37) in
  \cite{GKP}*{p.{} 255} or for a combinatorial proof see
  \cite{BG}*{\S7}.  Rearranging, and homogenizing this identity gives
  the equation $k! \, z^k = \sum_{i=1}^{k} \eulerian{k}{i-1} \bigl( z
  + (i-1)y \bigr) \bigl( z + (i-2)y \bigr) \dotsb \bigl( z + (i-k)y
  \bigr)$, in the $\ZZ$-graded polynomial ring $\QQ[z,y]$ with
  $\deg(z) = \deg(y) = 1$.  Under the substitution $z \mapsto D_{0}$
  and $y \mapsto D_{1} - D_{0}$, we obtain the equation
  \[
  k! \, D_{0}^k = \sum_{i=1}^{k} \eulerian{k}{i-1} \Bigl(
  \bigl(1-(i-1) \bigr)D_{0} - (i-1)D_1 \Bigr) \dotsb \Bigl( \!
  \bigl(1-(i-k) \bigr)D_{0} - (i-k)D_{1} \! \Bigr)
  \]
  in $A^*(X)_\QQ$.  To complete the proof, we observe that the ideal
  $L$ contains the linear relation $D_{i} = (1-i)D_{0} - iD_{1}$ for
  all $-m \leq i \leq n$.
\end{proof}

Using this lemma, we can compute the degree of certain complete
intersections in $X$.

\begin{proposition}
  \label{p:count}
  Let $g_1, \dotsc, g_{m+n-1}$ be homogeneous elements of $S$ such
  that $\deg(g_j) = \left[ \begin{smallmatrix} j \\
      0 \end{smallmatrix} \right]$ for $1 \leq j \leq m+n-1$.  If $\;
  \variety_{\! X}(g_1,\dotsc,g_{m+n-1})$ is a zero-dimensional
  subscheme of $X$, then its degree is $\eulerian{m+n-1}{m-1}$.
\end{proposition}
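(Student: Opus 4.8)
The plan is to compute the class of $\variety_{\! X}(g_1, \dotsc, g_{m+n-1})$ directly in $A^*(X)_\QQ$ and read off its degree using Lemma~\ref{l:chow}. I would begin by determining the divisor class of each hypersurface $\variety_{\! X}(g_j)$. Since $g_j$ is homogeneous of degree $\left[\begin{smallmatrix} j \\ 0\end{smallmatrix}\right]$, and the $\ZZ^2$-grading of $S$ records the class of each variable (so that $D_j$ has degree $\left[\begin{smallmatrix} 1 \\ j\end{smallmatrix}\right]$), the classes $[\variety_{\! X}(g_j)]$ and $j\, D_0$ have the same degree $\left[\begin{smallmatrix} j \\ 0\end{smallmatrix}\right]$. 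Because $X$ is simplicial and its $m+n+1$ rays span an $(m+n-1)$-dimensional space, the degree map $A^1(X)_\QQ \to \QQ^2$ is an isomorphism, whence $[\variety_{\! X}(g_j)] = j\, D_0$.

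Next, the hypothesis that $\variety_{\! X}(g_1, \dotsc, g_{m+n-1})$ is zero-dimensional makes it a proper intersection of $m+n-1 = \dim X$ effective divisors on the simplicial toric variety $X$, so its class is the product of the individual classes. Together with the previous step this gives
\[
  [\variety_{\! X}(g_1, \dotsc, g_{m+n-1})] = \prod_{j=1}^{m+n-1} j\, D_0 = (m+n-1)! \; D_0^{\,m+n-1} \, .
\]
Applying Lemma~\ref{l:chow} with $k = m+n-1$ rewrites the right-hand side as $\sum_{i=1}^{m+n-1} \eulerian{m+n-1}{i-1}\, D_{(-i,\, m+n-i)}$. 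The decisive point is that $D_{(-i,\, m+n-i)}$ vanishes in $A^*(X)_\QQ$ unless both $i \leq m$ and $m+n-i \leq n$, which forces $i = m$; hence every term collapses except $\eulerian{m+n-1}{m-1}\, D_{(-m,n)}$. Comparing with the defining relation $[Y] = \deg(Y)\, D_{(-m,n)}$ then yields $\deg \variety_{\! X}(g_1,\dotsc,g_{m+n-1}) = \eulerian{m+n-1}{m-1}$.

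I expect the main obstacle to lie in the two class-theoretic steps rather than in the combinatorics: identifying $[\variety_{\! X}(g_j)]$ with $j\, D_0$ from the degree data, and justifying that the class of the zero-dimensional complete intersection equals the product of the divisor classes. Both are standard for simplicial toric varieties --- the degree map is an isomorphism after tensoring with $\QQ$, and proper intersections multiply in $A^*(X)_\QQ$ --- but they must be handled carefully because $X$ is singular, so only rational coefficients are available. Once these are secured, the reduction of the Eulerian sum to its single surviving term is immediate.
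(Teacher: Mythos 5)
Your proposal is correct and follows essentially the same route as the paper: identify $[\variety_{\! X}(g_j)]$ with $j\,D_0$ via the $\ZZ^2$-grading, use zero-dimensionality to conclude the class of the complete intersection is $\prod_j j\,D_0 = (m+n-1)!\,D_0^{\,m+n-1}$, and then invoke Lemma~\ref{l:chow} together with the vanishing of $D_{(-i,\,m+n-i)}$ for $i \neq m$. Your added remark that $A^1(X)_\QQ \cong \QQ^2$ just makes explicit the justification the paper leaves implicit.
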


\begin{proof}
  Each homogeneous polynomial $g_j$ defines a hypersurface in $X$.
  This Cartier divisor is rationally equivalent to $j \, D_{0}$
  because we have $\deg(g_j) = \left[
    \begin{smallmatrix} j \\ 0 \end{smallmatrix} \right]$ for $1 \leq
  j \leq m+n-1$.  The subscheme $Z := \variety_{\!
    X}(g_1,\dotsc,g_{m+n-1})$ has dimension zero if and only if it is
  a complete intersection.  Hence, the degree of $Z$ equals the
  appropriate intersection number, namely the coefficient of
  $D_{(-m,n)}$ in $\prod_{j=1}^{m+n-1} j \, D_{0}$; see
  Proposition~7.1 in \cite{Fulton2}.  Since $D_{(-i,k-i+1)}= 0$ for $i
  > m$ or $k-i+1 > n$, Lemma~\ref{l:chow} yields $\prod_{j=1}^{m+n-1}
  j \, D_{0} = (m+n-1)! \, D_{0}^{\, m+n-1} = \eulerian{m+n-1}{m-1}
  D_{(-m,n),}$.
\end{proof}

\begin{proof}[Proof of Theorem~\ref{t:main}]
  Applying Conjecture~\ref{c:sturmfels} for the pairs of positive
  integers $(m,n-1)$ and $(m-1,n)$, we see that $\variety_{\! X}(J)
  \cap \variety_{\! X}(x_{-m}x_n) = \varnothing$.  It follows that
  $[\variety_{\! X}(J)]$ belongs to the socle of $A^*(X)_{\QQ}$ and
  thus $\variety_{\! X}(J)$ has dimension zero.  Since $x_{-m}x_n$ is
  a nonzerodivisor on $\variety_{\! X}(J)$, we see that
  $\deg(I_{m,n})$ equals $\deg \variety_{\! X}(J)$; see
  \S\ref{s:toric}.  Therefore, applying Proposition~\ref{p:count}
  completes the proof.
\end{proof}

\section{Sparse Laurent Polynomials}
\label{s:refine}

\noindent
In this section, we compute the degree of subschemes of $X(m,n)$
corresponding to certain sparse Laurent polynomials. Given the
recurrence relation that these degrees satisfy, they may be regarded
as a generalized form of Eulerian numbers. This computation also
generates a decomposition of $\eulerian{m+n-1}{m-1}$ as a sum of
nonnegative integers; see \eqref{e:decomp}.

Fix a pair of positive integers $(m,n)$ and let $d$ be a positive
integer dividing $m+n$.  Consider the closed subscheme $X_d$ of $X$
corresponding to Laurent polynomials of the form
\[
x_{-m} \, z^{-m} + x_{-m+d}^{\,} \, z^{-m+d} + \dotsb + x_{n-d}^{\,} \, z^{\,
  n-d} + x_{n} \, z^{\, n} \, .
\]
In other words, $X_d$ is the subscheme of $X$ defined by the monomial
ideal generated by the variables not belonging to $\{x_{-m},x_{-m+d},
\dotsc, x_{n-d}, x_{n} \}$.  When $d = 1$, we have $X_d = X$.

For $1 \leq j \leq m+n-1$, let $g_j$ be a generic polynomial in $S$ of
degree $\left[ \begin{smallmatrix} j \\ 0 \end{smallmatrix} \right]$.
These generic polynomials cut out the subscheme $Z := \variety_{\!
  X}(g_1, \dotsc, g_{m+n-1})$.  Consider $Z_d := Z \cap X_d$.  To
compute the degree of $Z_d$, we introduce the following notation.  If
$0 \leq \ell \leq d-1$, then we define
\[
\eulerian{d-1}{\ell}_{\!\! d} :=
\begin{cases}
  0 & \text{if $\gcd(\ell+1,d) \neq 1$,} \\
  1 & \text{if $\gcd(\ell+1,d) = 1$,}
\end{cases}
\]
and we extend the definition of $\eulerian{k}{\ell}_{\! d}$ for all
triples $(k,\ell,d)$ such that $d$ divides $k+1$ via
\[
\eulerian{k}{\ell}_{\!\! d} := (\ell+1)
\eulerian{k-d}{\ell}_{\!\! d} + (k-\ell)
\eulerian{k-d}{\ell-d}_{\!\! d} \, .
\]
It follows that $\eulerian{k}{\ell} = \eulerian{k}{\ell}_{\! 1}$.

\begin{proposition}
  \label{p:degZ}
  The scheme $Z_d$ has dimension zero and degree
  $\eulerian{m+n-1}{m-1}_{\! d}$ when $\gcd(d,n) = 1$; otherwise
  the scheme $Z_d$ is empty.
\end{proposition}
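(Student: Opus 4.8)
The plan is to mirror the strategy used for Proposition~\ref{p:count}, but now intersecting the complete intersection $Z$ with the coordinate subscheme $X_d$. First I would identify the class $[X_d]$ in $A^*(X)_\QQ$. Since $X_d$ is cut out by the vanishing of all variables $x_\ell$ with $\ell \not\equiv -m \pmod d$, its class is the product of the corresponding divisors $\prod_{\ell \not\equiv -m} D_\ell$. Using the linear relations $D_\ell = (1-\ell)D_0 - \ell D_1$ from the ideal $L$, together with the monomial relations from $M$, I would rewrite this product in the distinguished basis $\{D_{(-i,j)}\}$. The degree of $Z_d$ is then the coefficient of $D_{(-m,n)}$ in the product $[X_d] \cdot \prod_{j=1}^{m+n-1} j\,D_0$, evaluated via Lemma~\ref{l:chow} exactly as in the proof of Proposition~\ref{p:count}.

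The combinatorial heart of the argument is to show that this coefficient equals the generalized Eulerian number $\eulerian{m+n-1}{m-1}_{\! d}$ defined by the recurrence above. I expect the cleanest route is to establish the recurrence directly on the geometric side. The idea is to peel off one ray: restricting attention to the largest index, the subscheme $X_d \subset X(m,n)$ should decompose according to whether the top power $z^n$ and the bottom power $z^{-m}$ are retained, producing two smaller configurations indexed by $(m,n-d)$ and $(m-d,n)$. Matching the geometric splitting against the two-term recurrence $\eulerian{k}{\ell}_{\! d} = (\ell+1)\eulerian{k-d}{\ell}_{\! d} + (k-\ell)\eulerian{k-d}{\ell-d}_{\! d}$, with $k = m+n-1$ and $\ell = m-1$, should give the coefficients $(\ell+1) = m$ and $(k-\ell) = n$; these factors should emerge naturally from the intersection multiplicities when one removes the extremal rays. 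The base case $d \mid m+n$ with a single surviving configuration matches the definition of $\eulerian{d-1}{\ell}_{\! d}$.

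The dimension and emptiness claims require separate care. For dimension zero, I would argue that $Z$ is a complete intersection of codimension $m+n-1$ and that $X_d$, being a coordinate subvariety, meets it properly for generic $g_j$; genericity forces the intersection into the expected dimension or else empties it. The emptiness when $\gcd(d,n) \neq 1$ is the subtle point: it reflects an arithmetic obstruction analogous to the Duistermaat--van der Kallen phenomenon, namely that a Laurent polynomial supported on exponents $\equiv -m \pmod d$ behaves like a polynomial in $z^d$ after a shift, so its powers have constrained constant-term vanishing. Concretely, the substitution $z \mapsto z$ collapses the support to an arithmetic progression of common difference $d$, and the constant term of $\tilde f^i$ vanishes unless $d \mid \bm{w}\cdot\bm{u}$ forces compatibility conditions tied to $\gcd(d,n)$; when this gcd exceeds one the excess relations make the system overdetermined and $Z_d$ empty.

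The hard part will be pinning down the exact intersection multiplicities in the geometric recurrence so that the coefficients $(\ell+1)$ and $(k-\ell)$ appear with the correct weights, since $X$ is only simplicial and the divisors $D_\ell$ are $\QQ$-Cartier rather than Cartier; one must track the rational multiplicities carefully through the shelling-adapted basis. I expect the verification that the base case $\eulerian{d-1}{\ell}_{\! d}$ correctly records the $\gcd(\ell+1,d)=1$ condition to be the most delicate bookkeeping, as it encodes precisely when a single Laurent polynomial with $d$-spaced support survives, tying the arithmetic emptiness criterion to the combinatorial definition.
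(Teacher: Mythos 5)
Your central computational step fails for every $d>1$: the intersection of $Z$ with $X_d$ is not proper in $X$, so $\deg(Z_d)$ is \emph{not} the coefficient of $D_{(-m,n)}$ in $[X_d]\cdot\prod_{j=1}^{m+n-1} j\,D_0$. Indeed, a monomial of degree $\left[\begin{smallmatrix} j \\ 0\end{smallmatrix}\right]$ supported on $\{x_{-m},x_{-m+d},\dotsc,x_n\}$ forces $d\mid mj$ (every exponent in the support is $\equiv -m \bmod d$), so when $\gcd(d,n)=1$ the restriction of $g_j$ to $X_d$ vanishes identically unless $d\mid j$. Hence $\variety_{\! X}(g_j)\supseteq X_d$ for all $j$ not divisible by $d$, the intersection is excessive, and the class $[X_d]\cdot\prod_{j} j\,D_0$ lies in $A^{>\dim X}(X)_\QQ=0$; it cannot detect the nonempty zero-dimensional scheme $Z_d$. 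The same phenomenon breaks your dimension argument: the ``expected dimension'' of $X_d\cap\variety_{\! X}(g_1,\dotsc,g_{m+n-1})$ is negative, so ``genericity forces expected dimension or emptiness'' would wrongly predict $Z_d=\varnothing$ whenever $d>1$. The computation must instead be carried out intrinsically on $X_d$, where only the $(m+n)/d-1$ forms $g_d,g_{2d},\dotsc,g_{m+n-d}$ cut nontrivially --- matching $\dim X_d=(m+n)/d-1$ --- and where a Worpitzky-type identity adapted to the numbers $\eulerian{k}{\ell}_{\! d}$ replaces Lemma~\ref{l:chow}; this is what the paper means by ``appropriate modifications'' of Lemma~\ref{l:chow} and Proposition~\ref{p:count}.

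Even after restricting to $X_d$, the dimension claim is not a formal consequence of genericity, because the linear systems $W_{jd}$ on $X_d$ have base loci. The paper controls these with the incidence variety $U\subseteq X_d\times\PP(W)$, stratifying by the schemes $D(\mathcal{S})$ and invoking the dimension bound of Lemma~\ref{l:poly} to get $\dim U\leq\dim\PP(W)$; your proposal contains no substitute for this step. Your heuristic for emptiness when $e:=\gcd(d,n)>1$ points in the right direction --- the nontrivial equations are those with $(d/e)\mid j$, so the system becomes overdetermined on $X_d$ --- but ``overdetermined and generic'' does not imply empty in the presence of base loci; the paper again needs Lemma~\ref{l:poly}, via the identification $X(m,n)_d\cong X(m/e,n/e)_{d/e}$ and the fact that the extra generic form $g_{m'+n'}$ is a nonzerodivisor on the relevant components of the incidence variety, to conclude $Z_d=\varnothing$.
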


Before proving this proposition, we record a technical lemma. Let
$W_i$ be the vector space of all polynomials in $S$ of degree $\left[
  \begin{smallmatrix} i \\ 0 \end{smallmatrix} \right]$ with support
contained in $\{x_{-m}, x_{-m+d}, \dotsc, x_{n-d}, x_{n} \}$. Given a
subset $\mathcal{S} \subseteq \{d,2d,\dotsc,m+n-d\}$, let
$D(\mathcal{S})$ be the subscheme of $X_d$ defined by the ideal
generated by $W_i$ for all $i \in \mathcal{S}$.

\begin{lemma}
  \label{l:poly}
  If $\mathcal{S} \subseteq \{ d, 2d, \dotsc, m+n-d \}$, then $\dim
  D(\mathcal{S}) \leq \frac{m+n}{d} -1 -|\mathcal{S}|$.
\end{lemma}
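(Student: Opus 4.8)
The plan is to reinterpret $D(\mathcal{S})$ through the torus orbits of $X_d$ and thereby reduce the dimension bound to a single combinatorial inequality about balancing multisets of exponents.

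First I would record the numerics. The subscheme $X_d$ is itself the simplicial toric variety whose Cox ring is the polynomial ring in the $\tfrac{m+n}{d}+1$ surviving variables $\{x_{-m}, x_{-m+d}, \dotsc, x_n\}$, so writing $r := \tfrac{m+n}{d}$ we have $\dim X_d = r-1$ and the asserted bound reads $\dim D(\mathcal{S}) \leq (r-1) - |\mathcal{S}|$. I index the surviving variables by $k = 0, 1, \dotsc, r$, setting $y_k := x_{-m+kd}$, so that $y_k$ carries the position $p_k := -m+kd$ as the second coordinate of its degree. Because each $W_i$ is spanned by monomials, the ideal $\langle W_i : i \in \mathcal{S}\rangle$ is a monomial ideal on $X_d$, and hence $D(\mathcal{S})$ is a union of torus-orbit closures. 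Writing $O_T$ for the orbit whose nonvanishing coordinates are indexed by $T \subseteq \{0, \dotsc, r\}$, one has $\dim \overline{O_T} = |T|-2$, and $O_T$ is a nonempty orbit of $X_d$ exactly when $T$ meets both $\{k : p_k < 0\}$ and $\{k : p_k \geq 0\}$; call such $T$ admissible. It therefore suffices to bound $|T|$ over the admissible $T$ with $O_T \subseteq D(\mathcal{S})$.

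Next I would make the containment explicit. By the multinomial description of $\cst{\tilde{f}^i}$, and since every element of $\mathcal{S}$ is a multiple $td$ of $d$, the space $W_{td}$ is spanned by the monomials $\prod_k y_k^{a_k}$ with $\sum_k a_k = td$ and $\sum_k a_k p_k = 0$; equivalently $\sum_k k a_k = mt$, i.e. $0$ is a sum of $td$ positions drawn with repetition from $P_T := \{p_k : k \in T\}$. A monomial of $W_{td}$ fails to vanish on $O_T$ iff its support lies in $T$, so $O_T \subseteq \variety_{\! X_d}(W_{td})$ precisely when no such balanced multiset is supported on $T$. Declaring $t \in \{1, \dotsc, r-1\}$ to be \emph{bad for $T$} when $T$ admits no balanced multiset of size $td$, the condition $O_T \subseteq D(\mathcal{S})$ forces every $i = td \in \mathcal{S}$ to be bad for $T$, whence $|\mathcal{S}|$ is at most the number of bad $t$. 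Thus the lemma follows once I prove the universal inequality that, for every admissible $T$, the number of bad $t$ is at most $|T^{\mathrm{c}}| = (r+1) - |T|$; equivalently, that $T$ admits balanced multisets of at least $|T|-2$ distinct sizes $td$ with $1 \leq t \leq r-1$.

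The universal inequality is a statement about the numerical semigroup $A_T := \{L \in \NN : 0 \text{ is a sum of } L \text{ elements of } P_T\}$: since $t$ is bad exactly when $td \notin A_T$, I must show that at most $|T^{\mathrm{c}}|$ of the multiples $td$ with $1 \leq t \leq r-1$ avoid $A_T$. The base case of full support is clear, because consecutive positions realize every intermediate sum and so $A_T$ omits nothing. The natural strategy is an induction on $|T|$ that charges each omitted coordinate in $T^{\mathrm{c}}$ to a single size whose attainability is destroyed by its deletion, and this is where I expect the genuine difficulty to lie: deleting one position can remove an entire arithmetic progression of sizes from $A_T$ rather than just one (most dramatically the central position $0$, present only when $d \mid m$), and a balanced multiset may intrinsically require three or more distinct positions with no two-term subrelation, so no purely pairwise or one-step-local count can succeed. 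The crux is therefore a careful gap analysis of $A_T$ that survives these degeneracies while still matching bad sizes injectively to omitted positions, with the high-multiplicity positions handled by a separate direct argument.
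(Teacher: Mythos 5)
Your setup coincides with the paper's: both identify $D(\mathcal{S})$ with a finite union of coordinate subspaces (orbit closures) of dimension $|T|-2$, and both reduce the lemma to the combinatorial claim that an admissible support $T$ carries balanced monomials in at least $|T|-2$ of the degrees $\left[\begin{smallmatrix} td \\ 0 \end{smallmatrix}\right]$ with $1 \leq t \leq \frac{m+n}{d}-1$. But you stop precisely at that claim, flagging it as ``the genuine difficulty'' and proposing an induction on $|T|$ with a charging argument whose obstructions you yourself enumerate (a deleted position killing a whole progression of sizes, balanced relations needing three or more positions). That final inequality is the entire content of the lemma, so as written the proposal has a genuine gap: the key step is described but not proved, and the route you sketch for it is not one that closes easily.

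The missing idea is that one never needs balanced multisets supported on more than two positions. Split the support as $\mathcal{E}_+ := \{\, i \geq 0 : p_i \neq 0 \,\}$ and $\mathcal{E}_- := \{\, j > 0 : p_{-j} \neq 0 \,\}$ (so $|T| = |\mathcal{E}_+| + |\mathcal{E}_-|$, and admissibility makes both nonempty). For $i \in \mathcal{E}_+$ and $j \in \mathcal{E}_-$ the single monomial $x_i^{\,j} x_{-j}^{\,i}$ has degree $\left[\begin{smallmatrix} i+j \\ 0 \end{smallmatrix}\right]$ and does not vanish at $P$, so every element of the sumset $\mathcal{E}_+ + \mathcal{E}_-$ lying in $\{d, 2d, \dotsc, m+n-d\}$ is a ``good'' size in your terminology. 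All elements of this sumset are multiples of $d$ bounded by $m+n$, so the truncation discards at most the one element $m+n$; combined with the elementary bound $|\mathcal{E}_+ + \mathcal{E}_-| \geq |\mathcal{E}_+| + |\mathcal{E}_-| - 1$, this produces at least $|T|-2$ good sizes, which is exactly what is needed. In particular no induction, no gap analysis of the numerical semigroup $A_T$, and no special handling of the position $0$ (which in any case makes every size good via powers of $x_0$) is required. I recommend replacing your final paragraph with this sumset argument.
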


\begin{proof}
  It suffices to show that $D(\mathcal{S})$ is contained in a finite
  union of subschemes with dimension $\frac{m+n}{d} - 1 -
  |\mathcal{S}|$.  To a point $P = [p_{-m} : p_{-m+d} : \dotsc : p_n]$
  in the subscheme $D(\mathcal{S})$, we associate the support sets
  $\mathcal{E}_+ := \{ i \geq 0 \mid p_i \neq 0 \}$ and
  $\mathcal{E}_- := \{ i>0 \mid p_{-i} \neq 0 \}$. From the definition
  of $X_d$, we deduce that
  $\mathcal{E}_{+} \subseteq \{m,m-d,\dotsc\}$ and $\mathcal{E}_-
  \subseteq \{n,n-d,\dotsc\}$.  Observe that $P$ lies in the subspace
  defined by the ideal $\langle x_i \mid i \in \{-m,-m+d, \dots, n\}
  \setminus (\mathcal{E}_+ \cup \mathcal{E}_-) \} \rangle$ and that
  this subspace has dimension $|\mathcal{E}_+| + |\mathcal{E}_-|-2$.
  Hence, it is enough to prove $|\mathcal{E}_+| + |\mathcal{E}_-| - 2
  \leq \frac{m+n}{d} -1 -|\mathcal{S}| = |\mathcal{S}^\complement|$
  where $\mathcal{S}^\complement := \{d, 2d, \dotsc, m+n-d \}
  \setminus \mathcal{S}$.  To accomplish this, we consider the set
  \[
  \mathcal{P} := \{ i + j \mid \text{$i \in \mathcal{E}_+$, $j \in
    \mathcal{E}_-$, and $i+j \leq m+n-d$} \} \subseteq \{ d, 2d,
  \dotsc, m+n-d \} \, .
  \]
  To conclude, one verifies that $\mathcal{P} \subseteq
  \mathcal{S}^\complement$ and that $|\mathcal{E}_+| + |\mathcal{E}_-|
  -2 \leq |\mathcal{P}|$.
\end{proof}

\begin{proof}[Sketch of the Proof for Proposition~\ref{p:degZ}]
  To begin, we assume that $\gcd(d,n) = 1$. Let $\PP(W)$ be the
  product $\PP(W_d) \times \PP(W_{2d}) \times \dotsb \times
  \PP(W_{m+n-d})$ and consider the incidence variety
  \[
  U := \bigl\{ \bigl(P,(h_d,\dotsc,h_{m+n-d}) \bigr) \mid h_d(P) =
  \dotsb = h_{m+n-d}(P) = 0 \bigr\} \subseteq X_d \times \PP(W)
  \]
  with projection maps $\pi_1 \colon U \to X_d$ and $\pi_2 \colon U
  \to \PP(W)$. We claim that $\dim U \leq \dim \PP(W)$. To see this,
  observe that a general point $Q$ in $X_d$ does not belong to the
  base locus of any $W_i$, so the fiber $\pi_1^{-1}(Q)$ has dimension
  $\dim \PP(W) - \frac{m+n}{d} +1$.  One must also consider the
  dimensions of the various $\pi_1^{-1}\bigl( D(\mathcal{S}) \bigr)$,
  but Lemma~\ref{l:poly} shows that none of these preimages has
  dimension greater than $\dim \PP(W)$. Since $Z_d$ equals the fiber
  of $\pi_2$ over a general point of $\PP(W)$, the inequality $\dim U
  \leq \dim \PP(W)$ implies that $Z_d$ has dimension zero.  The
  appropriate modifications to the proofs of Lemma~\ref{l:chow} and
  Proposition~\ref{p:count} show that the degree of $Z_d$ is
  $\eulerian{m+n-1}{m-1}_{\! d}$.

  Assume that $e := \gcd(d,n) > 1$. If $m' := m/e$, $n' := n/e$, and
  $d' := d/e$, then there is an isomorphism $X_d = X(m,n)_d
  \xrightarrow{\cong} X(m',n')_{d'} = X'_{d'}$. Under this
  identification, $Z_d$ is determined by the ideal $\langle g_{d'},
  g_{2d'}, \dotsc, g_{e(n'+m')-d'} \rangle$.  Let $U'$ be the
  incidence variety for the parameters $(m',n',d')$.  From the proof
  of Lemma~\ref{l:poly}, we deduce that $x_{-m'}x_{n'}$ is a
  nonzerodivisor on the top dimensional components of $U'$.  Hence,
  the generic polynomial $g_{m'+n'}$ is also a nonzerodivisor on $U'$,
  so the intersection of the general fibre of $\pi'_2 \colon U' \to
  \PP(W)$ with the hypersurface defined by $g_{m'+n'}$ is
  empty. Therefore, we have $Z_d = \varnothing$.
\end{proof}

To obtain a decomposition for the Eulerian numbers, we stratify the
generic complete intersection $Z$ by singularity type.  Let
$X_d^\circ$ be the open subscheme of $X_d$ consisting of all
singularities of type $B(\ZZ/d\ZZ)$ in $X$.  Each point in $Z$ belongs
to $X_d$ for some $d$ that divides $m+n$.  Setting $Z_d^\circ := Z
\cap X_d^{\circ}$, we obtain
\begin{equation}
  \tag{$\heartsuit$}
  \label{e:decomp}
  \eulerian{m+n-1}{m-1} = \deg(Z) =  \sum_{d \mid
    m+n} \deg(Z_d^{\circ}).
\end{equation}
Moreover, M\"obius inversion and Proposition~\ref{p:degZ} yield
\[
\deg(Z_d^{\circ}) = \sum\nolimits_{c \mid (m+n)/d} \mu(c)
\eulerian{m+n-1}{m-1}_{\!\! cd} \, ,
\]  
where $\mu$ is the classical M\"obius function; see eq. (4.55) and
(4.56) in \cite{GKP}*{p.{} 136}.

The equation \eqref{e:decomp} has an elegant combinatorial refinement
which we learnt from Alexander Postnikov; cf.{} \cite{LP}*{\S6}.  To
sketch this refinement, we observe that the Eulerian number
$\eulerian{n}{k}$ also counts the circular permutations of
$\{0,\dotsc,n\}$ with $k+1$ circular ascents.  The group
$\ZZ/(n+1)\ZZ$ naturally acts on this subset of circular permutations;
add $1$ modulo $n+1$ to each element.  The cardinalities of the orbits
then give rise to \eqref{e:decomp}.  More precisely,
$\deg(Z_d^{\circ})$ equals the product of $(m+n)/d$ and the number of
orbits with cardinality $(m+n)/d$.  For example, if $m = 2$ and $n =
3$, then we have $\eulerian{4}{1} = 11$, $\deg(Z_5^{\circ}) = 1$, and
$\deg(Z_1^{\circ}) = 10 = 2 \cdot 5$.  On the other hand, the eleven
circular permutations of $\{0, \dotsc, 4\}$ with two circular ascents
are partitioned into three $\ZZ/5\ZZ$-orbits, namely
$\{0\,3\,2\,4\,1\}$, $\{ 0\,1\,4\,3\,2, 0\,4\,3\,1\,2, 0\,4\,2\,3\,1,
0\,3\,4\,2\,1, 0\,3\,2\,1\,4\}$, and $\{ 0\,2\,1\,4\,3, 0\,4\,1\,3\,2,
0\,2\,4\,3\,1, 0\,4\,2\,1\,3, 0\,3\,2\,4\,1\}$.

\section{Further Questions}
\label{s:open}

\subsection{Regular sequence}
\label{s:regseq}

Theorem~\ref{t:main} underscores the significance of
Conjecture~\ref{c:sturmfels}.  To prove this conjecture, it would be
enough to show that $\variety_{\! X}( \cst{\tilde{f}^1}, \dotsc,
\cst{\tilde{f}^{m+n}} )$ is the empty set.  From this perspective, the
proof of Proposition~\ref{p:degZ} could be viewed as evidence
supporting this conjecture: for generic elements $g_j$
of $S$ with degree $\left[ \begin{smallmatrix} j \\
    0 \end{smallmatrix} \right]$, the subscheme $\variety_{\! X}(g_1,
\dotsc, g_{m+n})$ is indeed empty.  On the other hand,
Conjecture~\ref{c:sturmfels} is false over a field with positive
characteristic. For instance, if $f := z^{-1} + z \in
\FF_2[z,z^{-1}]$, then we have $\cst{f^i} = 0$ for all $i$.  Even if
Conjecture~\ref{c:sturmfels} holds, the $\FF_p$\nobreakdash-vector
space $\FF_p[x_{-m+1}, \dotsc,x_{n-1}]/I_{m,n}$ may fail to have a
finite dimension; this happens when $p=2$, $m=1$, and $n=2$.

\subsection{Combinatorics}
\label{s:comb}

The positivity and simplicity of many formulae in this article suggest
that we have uncovered only part of the combinatorial structure.  To
help orient the search for further structure, we pose two specific
questions:
\begin{itemize}
\item Can one find an explicit basis for $\CC[x_{-m+1}, \dots,
  x_{n-1}]/I_{m,n}$ together with a bijection to the permutations of
  $[m+n-1]$ with exactly $m-1$ ascents?
\item Does $\displaystyle\sum_{j \geq 0} \dim_\CC \bigl(
  \tfrac{S}{\langle g_1, \dotsc, g_{m+n} \rangle}
  \bigr)_{\bigl[ \begin{smallmatrix} j \\ 0 \end{smallmatrix} \bigr]}
  = \textstyle\eulerian{m+n-1}{m-1} $ hold for all positive $m$ and
  $n$?  When $m = 3$ and $n = 3$, we have $\textstyle\eulerian{5}{2} =
  66 = 1 + 0 + 2 + 3 + 6 + 7 + 9 + 10 + 9 + 7 + 6 + 3 + 2 + 0 + 1$.
\end{itemize}

\begin{bibdiv}
\begin{biblist}

\bib{BeckStapledon}{article}{
  author={Beck, M.},
  author={Stapledon, A.},
  title={On the log-concavity of Hilbert series of Veronese subrings
    and Ehrhart series},
  journal={Math. Z.},
  volume={264},
  date={2010},
  number={1},
  pages={1095~--~207},
}

\bib{Brenti}{article}{
  label={Bre},
  author={Brenti, F.},
  title={Unimodal polynomials arising from symmetric functions},
  journal={Proc. Amer. Math. Soc.},
  volume={108},
  date={1990},
  number={4},
  pages={1133~--~1141},
}

\bib{BrentiWelker}{article}{  
  author={Brenti, F.},
  author={Welker, V.},
  title={$f$-vectors of barycentric subdivisions},
  journal={Math. Z.},
  volume={259},
  date={2008},
  number={4},
  pages={849~--~865},
}

\bib{BG}{article}{
  author={Buhler, J.},
  author={Graham, R.L.},
  title={Juggling patterns, passing, and posets},
  book={
    editor={Hayes, D.F.},
    editor={Shubin, T.},
    title={Mathematical Adventures for Students and Amateurs},
    publisher={Mathematical Association of America},
    place={Washington, DC},
  },
  date={2004},
  pages={99~--~116},
}

\bib{Cox}{article}{
  label={Cox},
  author={Cox, D.A.},
  title={The homogeneous coordinate ring of a toric variety},
  journal={J. Algebraic Geom.},
  volume={4},
  date={1995},
  number={1},
  pages={17~--~50},
}
	
\bib{DvdK}{article}{
  label={DK},
  author={Duistermaat, J. J.},
  author={van der Kallen, W.},
  title={Constant terms in powers of a Laurent polynomial},
  journal={Indag. Math. (N.S.)},
  volume={9},
  date={1998},
  number={2},
  pages={221~--~231},
}
	
\bib{Fulton}{book}{
  label={Fu1},
  author={Fulton, W.},
  title={Introduction to toric varieties},
  series={Annals of Mathematics Studies},
  volume={131},
  publisher={Princeton University Press},
  place={Princeton, NJ},
  date={1993},
  pages={xii+157},
}

\bib{Fulton2}{book}{
  label={Fu2},
  author={Fulton, W.},
  title={Intersection theory},
  series={Ergebnisse der Mathematik und ihrer
    Grenzgebiete. 3. Folge.},  
  volume={2},
  edition={2},
  publisher={Springer-Verlag},
  place={Berlin},
  date={1998},
  pages={xiv+470},
}

\bib{GKP}{book}{
  author={Graham, R.L.},
  author={Knuth, D.E.},
  author={Patashnik, O.},
  title={Concrete mathematics},
  publisher={Addison-Wesley},
  place={Reading, MA},
  date={1989},
}
	
\bib{Hirzebruch}{article}{
  label={Hir},
  author={Hirzebruch, F.},
  title={Eulerian polynomials},
  journal={M\"unster J. Math.},
  volume={1},
  date={2008},
  number={1},
  pages={9~--~14},
}

\bib{LP}{article}{
   author={Lam, T.},
   author={Postnikov, A.},
   title={Alcoved polytopes. I},
   journal={Discrete Comput. Geom.},
   volume={38},
   date={2007},
   number={3},
   pages={453~--~478},
}

\bib{M2}{article}{
  label={M2},
  author={Grayson, D.R.},
  author={Stillman, M.E.},
  title={Macaulay 2, a software system for research in algebraic
    geometry},
  eprint={www.math.uiuc.edu/Macaulay2/}
}

\bib{Stanley}{article}{
  label={Sta},
  author={Stanley, R.P.},
  title={Log-concave and unimodal sequences in algebra,
    combinatorics,and geometry},
  conference={
    title={Graph theory and its applications: East and West},
    address={Jinan},
    date={1986},
  },
  book={
    series={Ann. New York Acad. Sci.},
    volume={576},
    publisher={New York Acad. Sci.},
    place={New York},
  },
  date={1989},
  pages={500~--~535},
}
	
\bib{Stembridge}{article}{
  label={Ste},
  author={Stembridge, J.R.},
  title={Eulerian numbers, tableaux, and the Betti numbers of a toric
    variety},
  journal={Discrete Math.},
  volume={99},
  date={1992},
  number={1-3},
  pages={307~--~320},
}	

\bib{Sturmfels}{book}{
  label={Stu},
  author={Sturmfels, B.},
  title={Solving systems of polynomial equations},
  series={CBMS Regional Conference Series in Mathematics},
  volume={97},
  publisher={American Mathematical Society}, 
  place={Washington, DC},
  date={2002},
  pages={viii+152},
}

\end{biblist}
\end{bibdiv}

\raggedright
 
\end{document}